\newcounter{minutes}\setcounter{minutes}{\time}
\newcounter{hours}\setcounter{hours}{\time}
\date{}
\newfont{\cyrilic}{wncyr10 scaled 1000}
\title[Generalized trigonometric and hyperbolic functions]{Logarithmic mean inequality for generalized trigonometric and hyperbolic functions}
\author[B.A Bhayo]{Barkat Ali Bhayo}
\address{Department of Mathematical Information Technology, University of Jyv\"askyl\"a, 40014 Jyv\"askyl\"a, Finland}
\email{bhayo.barkat@gmail.com}
\author[L. Yin]{Li Yin}
\address[L. Yin]{Department of Mathematics, Binzhou University, Binzhou City, Shandong Province, 256603, China}
\email{\href{mailto: L. Yin<yinli_79@163.com>}{yinli\_79@163.com}}
\newcommand{\comment}[1]{}
\theoremstyle{plain}
\newtheorem{theorem}[equation]{Theorem}
\newtheorem{lemma}[equation]{Lemma}
\newtheorem{corollary}[equation]{Corollary}
\numberwithin{equation}{section}
\begin{document}

\begin{abstract}
In this paper we study the convexity and concavity properties of generalized trigonometric and hyperbolic functions in case of Logarithmic mean.
\end{abstract}

\thanks {The second author was supported by NSF of Shandong
Province under grant numbers ZR2012AQ028, and by the
Science Foundation of Binzhou University under grant BZXYL1303}

\maketitle

\bigskip
{\bf 2010 Mathematics Subject Classification}: 33B10; 26D15; 26D99

{\bf Keywords}: Logarithmic mean, generalized trigonometric and hyperbolic functions, inequalities, generalized convexity.

\section{introduction}
Recently, the study of the generalized trigonometric and generalized hyperbolic functions has got huge attention of numerous authors, and has appeared the huge number of papers involving the equalities and inequalities and basis properties of these function, e.g. see \cite{bv1,bv2,bv3,bbv,be,egl,jq,kvz,take} and the references therein. These generalized trigonometric and generalized hyperbolic functions $p$-functions depending on the parameter $p>1$ were introduced by Lindqvist \cite{l} in 1995. These functions coincides with the usual functions for $p=2$. Thereafter Takesheu took one further step and generalized these function for two parameters $p,q>1$, so-called $(p,q)$-functions.
In \cite{bv2}, some convexity and concavity properties of $p$-functions were studied. Thereafter those results were extended in \cite{bbk} for two parameters in the sense of Power mean inequality. In this paper we study the convexity and concavity property of $p$-function with respect Logarithmic mean. 
Before we formulate our main result we 
will define 
generalized trigonometric and hyperbolic functions customarily.


The eigenfunction $\sin_p$ of the so-called one-dimensional $p$-Laplacian problem
\cite{dm}
$$-\Delta_p u=-\left(|u'|^{p-2}u'\right)'
=\lambda|u|^{p-2}u,\,u(0)=u(1)=0,\ \ \ p>1,$$
is the inverse function of $F:(0,1)\to \left(0,\frac{\pi_p}{2}\right)$, defined as
$$F(x)={\rm arcsin}_p(x)=\int^x_0(1-t^p)^{-\frac{1}{p}}dt,$$
where
$$\pi_p=2{\rm arcsin}_{p}(1)=\frac{2}{p}\int^1_0(1-s)^{-1/p}s^{1/p-1}ds=\frac{2}
{p}\,B\left(1-\frac{1}{p},\frac{1}{p}\right)=\frac{2 \pi}{p\,\sin\left(\frac{\pi}{p}\right)}\,,$$
here $B(.,.)$ denotes the classical beta function.

The function ${\rm arcsin}_p$ is called the generalized inverse sine function, and coincides with usual inverse sine function
for $p=2$. Similarly, the other generalized inverse trigonometric and hyperbolic functions
${\rm arccos}_p:(0,1)\to \left(0,\pi_p/2\right),\,{\rm arctan}_p:(0,1)\to (0,b_p),\,{\rm arcsinh}_p:(0,1)\to(0,c_p),\,
{\rm arctanh}_p:(0,1)\to (0,\infty)$, where
$$\,b_p=\frac{1}{2p}\left(\psi\left(\frac{1+p}{2p}\right)
-\psi\left(\frac{1}{2p}\right)\right)=2^{-\frac{1}{p}}
F\left(\frac{1}{p},\frac{1}{p};1+\frac{1}{p};\frac{1}{2}\right),\ c_p=\left(\frac{1}{2}\right)^{\frac{1}{p}}F\left(1,\frac{1}{p};1+\frac{1}{p},\frac{1}{2}\right),$$
are defined as follows
$${\rm arccos}_p(x)=\int^{(1-x^p)^{\frac{1}{p}}}_0(1-t^p)^{-\frac{1}{p}}dt,\quad 
{\rm arctan}_p(x)=\int^x_0(1+t^p)^{-1}dt,$$
$${\rm arcsinh}_p(x)=\int^x_0(1+t^p)^{-\frac{1}{p}}dt,\quad
{\rm arctanh}_p(x)=\int^x_0(1-t^p)^{-1}dt,$$
where $F(a,b;c;z)$ is \emph{Gaussian hypergeometric function} \cite{as}.


The generalized cosine function is defined by
$$\frac{d}{dx}\sin_{p}(x)=\cos_{p}(x),\quad x\in[0,\pi_{p}/2]\,.$$
It follows from the definition that
$$\cos_{p}(x)=(1-(\sin_{p}(x))^p)^{1/p}\,,$$
and
\begin{equation}\label{equ2}
|\cos_{p}(x)|^p+|\sin_{p}(x)|^p=1,\quad x\in\mathbb{R}.
\end{equation}
Clearly we get
$$\frac{d}{dx}\cos_p(x)=-\cos_p(x)^{2-p}\sin_p(x)^{p-1}.$$

The generalized tangent function $\tan_{p}$ is defined by
$$\tan_{p}(x)=\frac{\sin_{p}(x)}{\cos_{p}(x)},$$
and applying \eqref{equ2} we get
$$\frac{d}{dx}\tan_p(x)=1+\tan_p(x)^p.$$

For $x\in(0,\infty)$, the inverse of generalized hyperbolic sine function $\sinh_{p}(x)$ is defined by
$${\rm arcsinh}_{p}(x)=\int^x_0(1+t^p)^{-1/p}dt,\,$$
and generalized hyperbolic cosine and tangent functions are defined by
$$\cosh_{p}(x)=\frac{d}{dx}\sinh_{p}(x),\quad \tanh_{p}(x)=\frac{\sinh_{p}(x)}{\cosh_{p}(x)}\,,$$
respectively. It follows from the definitions that
\begin{equation}\label{equ3}
|\cosh_{p}(x)|^p-|\sinh_{p}(x)|^p=1.
\end{equation}
From above definition and (\ref{equ3})
we get the following derivative formulas,
$$\frac{d}{dx}\cosh_p(x)=\cos_p(x)^{2-p}\sin_p(x)^{p-1},\quad \frac{d}{dx}\tanh_p(x)=1-|\tanh_p(x)|^p.$$
Note that these generalized trigonometric and hyperbolic functions coincide with usual functions for $p=2$.

For two distinct positive real numbers $x$ and $y$, the Arithmetic mean, Geometric mean, Logarithmic mean, Harmonic mean and the Power mean of order $p\in\mathbb{R}$ are respectively defined by
$$A(x,y)=\frac{x+y}{2},\quad G(x,y)=\sqrt{xy},$$
$$L(x,y)=\frac{x-y}{\log(x)-\log(y)},\quad x\neq y,$$
$$H(x,y)=\frac{1}{A(1/x,1/y)},$$ and
$$M_t=\displaystyle\left\{\begin{array}{lll} \displaystyle\left(\frac{x^t+y^t}{2}\right)^{1/t},\;\quad t\neq 0,\\
\sqrt{x\,y}, \quad\qquad\qquad t=0\,.\end{array}\right.$$

Let $f:I\to (0,\infty)$ be continuous, where $I$ is a sub-interval of $(0,\infty)$. Let $M$ and $N$ be the means defined above, the we call that the function
$f$ is $MN$-convex (concave) if
$$f (M(x, y)) \leq (\geq)
N(f (x), f (y)) \,\, \text{ for \,\, all} \,\, x,y \in I\,.$$

Recently, Generalized convexity/concavity with respect to general mean values has been studied
by Anderson et al. in \cite{avv}. We recall one of their results as follows

\begin{lemma}\label{hh}\cite[Theorem 2.4]{avv}
Let $I$ be an open sub-interval of $(0,\infty )$ and let $f:I \to (0,\infty )$ be differentiable. Then $f$
is $HH$-convex (concave) on $I$ if and only if $x^2f'(x)/f(x)^2$
is increasing (decreasing). 
\end{lemma}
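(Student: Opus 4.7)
The plan is to reduce $HH$-convexity of $f$ to ordinary concavity of an auxiliary function via the standard reciprocal substitution, and then apply the differentiable characterization of concavity. Set $J=\{1/x:x\in I\}$, which is again an open sub-interval of $(0,\infty)$, and define $g:J\to(0,\infty)$ by $g(t)=1/f(1/t)$.

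First I would rewrite the $HH$-convexity inequality $f(H(x,y))\le H(f(x),f(y))$. Because $H(x,y)=1/A(1/x,1/y)$ and all quantities are positive, taking reciprocals reverses the inequality and, after setting $u=1/x$, $v=1/y$, it becomes
$$g\!\left(\frac{u+v}{2}\right)\ge \frac{g(u)+g(v)}{2},$$
i.e., midpoint concavity of $g$ on $J$. Since $f$ is positive and differentiable, so is $g$ (in particular continuous), so midpoint concavity upgrades to full concavity.

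Next I would invoke the classical fact that a differentiable function on an open interval is concave if and only if its derivative is non-increasing; thus $g$ is concave on $J$ iff $g'$ is decreasing on $J$. A chain-rule computation gives
$$g'(t)=\frac{f'(1/t)}{t^{2}\,f(1/t)^{2}}=\left.\frac{x^{2}f'(x)}{f(x)^{2}}\right|_{x=1/t}.$$
Since the map $t\mapsto 1/t$ is strictly decreasing, $g'(t)$ is decreasing in $t$ if and only if $x^{2}f'(x)/f(x)^{2}$ is increasing in $x$ on $I$. This proves the convex equivalence; the concave case follows by reversing every inequality in the argument above.

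The only delicate point, and the place where I would be most careful, is keeping track of the two reciprocations (one inside the mean, one outside) so that the direction of the inequality is correct at each stage; the passage from midpoint concavity to full concavity is standard given continuity of $g$, and the derivative computation is purely mechanical.
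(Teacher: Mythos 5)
Your proof is correct: the reduction of $HH$-convexity to ordinary (midpoint, hence by continuity full) concavity of $g(t)=1/f(1/t)$, followed by the derivative characterization and the chain-rule identity $g'(t)=x^{2}f'(x)/f(x)^{2}\big|_{x=1/t}$, gives exactly the stated equivalence, and the inequality directions are tracked correctly through both reciprocations. The paper itself offers no proof of this lemma --- it is quoted verbatim from Anderson, Vamanamurthy and Vuorinen \cite[Theorem 2.4]{avv} --- and your argument is essentially the standard one used there (the case $a=b=-1$ of the power-mean substitution), so there is nothing further to reconcile.
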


In \cite{baricz}, Baricz studied that if the functions $f$ is differentiable, 
then it is $(a,b)$-convex (concave) on $I$ if and only if $x^{1-a}f'(x)/f(x)^{1-b}$ is increasing (decreasing). 

It is important to mention that $(1, 1)$-convexity means the $AA$-convexity,
$(1, 0)$-convexity means the $AG$-convexity, and $(0, 0)$-convexity means
$GG$-convexity.

Motivated by the results given in \cite{avv,baricz}, we contribute to the topic by giving the following result.

\begin{theorem}\label{3.1-thm}
Let $f:I\to(0,\infty)$ be a continuous and $I\subseteq(0,\infty)$, then 
\begin{enumerate}
\item $L(f(x),f(y))\geq (\leq) f(L(x,y)),$
\item $L(f(x),f(y))\geq (\leq) f(A(x,y)),$
\end{enumerate}
if $f$ is increasing and $\log$-convex (concave).
\end{theorem}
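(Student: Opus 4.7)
The key tool is the integral representation
\[
L(a,b) \;=\; \int_0^1 a^{t}b^{1-t}\,dt,
\]
obtained by a one-line integration of $a^{t}b^{1-t}=b(a/b)^{t}$. Via this identity, the right-hand sides of (1) and (2) become integrals that can be compared pointwise with values of $f$ along the segment from $y$ to $x$.

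I would first establish (2) in the log-convex case. Log-convexity of $f$ supplies the pointwise bound $f(tx+(1-t)y)\leq f(x)^{t}f(y)^{1-t}$ for every $t\in[0,1]$. Integrating in $t$ and performing the substitution $s=tx+(1-t)y$ on the left gives
\[
\frac{1}{x-y}\int_{y}^{x} f(s)\,ds \;\leq\; \int_{0}^{1} f(x)^{t}f(y)^{1-t}\,dt \;=\; L(f(x),f(y)).
\]
Since log-convex functions are convex, the Hermite--Hadamard inequality provides the complementary lower bound $f(A(x,y))\leq\frac{1}{x-y}\int_{y}^{x}f(s)\,ds$, and concatenating the two estimates yields (2). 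Statement (1) is then an immediate consequence: the classical $L(x,y)\leq A(x,y)$ together with the monotonicity of $f$ gives $f(L(x,y))\leq f(A(x,y))\leq L(f(x),f(y))$.

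The log-concave case should parallel the argument above with each inequality reversed. The delicate step---and the one I would check most carefully---is the reversed Hermite--Hadamard, which strictly speaking requires $f$ to be concave and not merely log-concave. I would either verify that this is tacitly available in the intended applications, or replace the step by an alternative route: apply Jensen's inequality to the identity $f(L(x,y))=f\bigl(\int_{0}^{1} x^{t}y^{1-t}\,dt\bigr)$ and exploit whatever $GG$-monotonicity $f\circ\exp$ inherits from the log-concavity and monotonicity of $f$. This reversed direction is the only nontrivial part; the log-convex case is essentially mechanical once the integral representation of $L$ is in hand.
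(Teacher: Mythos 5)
Your proof of the log-convex direction is correct, and it reaches the same intermediate inequality as the paper, namely $\frac{1}{x-y}\int_y^x f(s)\,ds \le L(f(x),f(y))$, but by a genuinely different route: the paper writes $L(f(x),f(y))=\int_y^x f'(u)\,du\,\big/\int_y^x \frac{f'(u)}{f(u)}\,du$ and applies the Chebyshev integral inequality (its Lemma \ref{2.1-lem}) to the increasing pair $f$ and $f'/f$, which silently requires $f$ to be differentiable, whereas you integrate the pointwise log-convexity inequality against the representation $L(a,b)=\int_0^1 a^t b^{1-t}\,dt$. Your version is cleaner and works for merely continuous $f$, which matches the hypotheses as stated. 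From the intermediate inequality onward the two arguments coincide: Hermite--Hadamard for the convex $f$, then $L(x,y)\le A(x,y)$ plus monotonicity.

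Your reservation about the reversed direction is well founded, and the paper does not resolve it: its proof of the converse just says to ``use the concavity of the function,'' i.e., it tacitly strengthens log-concavity to concavity exactly where you predicted trouble (log-concave functions need not be concave, e.g.\ $e^{x-\varepsilon x^2}$ near $0$). There is a second defect in the reversed case that you should flag explicitly rather than leave to ``each inequality reversed'': even granting concavity, the chain yields $L(f(x),f(y))\le \frac{1}{x-y}\int_y^x f(s)\,ds \le f(A(x,y))$, which is part (2); but part (1) would require $L(f(x),f(y))\le f(L(x,y))$, and since $f$ is increasing and $L(x,y)\le A(x,y)$ one only gets $f(L(x,y))\le f(A(x,y))$ --- the final link points the wrong way and cannot be ``reversed.'' So part (1) in the log-concave case does not follow from this scheme, in your write-up or in the paper's; notably, when the paper applies that case to $\sin_p$ in Theorem \ref{mainresult} it also supplies an independent second proof.
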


\begin{theorem}\label{mainresult} For $x,y\in(0,\pi_p/2)$, the following inequalities 
\begin{enumerate}
\item $L(\sin_p(x),\sin_p(y))\leq \sin_p(L(x,y)),\quad p>1,$
\item $L(\cos_p(x),\cos_p(y))\leq \cos_p(L(x,y)),\quad p\geq 2.$
\end{enumerate}
\end{theorem}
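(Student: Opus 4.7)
For part (1), the natural plan is to invoke Theorem \ref{3.1-thm}(1) directly. This requires that $\sin_p$ be increasing and log-concave on $(0,\pi_p/2)$. Monotonicity is immediate from $(\sin_p)'=\cos_p>0$ on this interval. For log-concavity, I would differentiate $(\log\sin_p)'(x)=\cos_p(x)/\sin_p(x)$ using the derivative formula for $\cos_p$ and collapse the result with the Pythagorean-type identity \eqref{equ2}; this should yield
\[
(\log\sin_p)''(x)=-\frac{\cos_p(x)^{2-p}}{\sin_p(x)^2}<0, \qquad p>1,
\]
which gives log-concavity and then Part (1) follows from Theorem \ref{3.1-thm}(1).

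For part (2), Theorem \ref{3.1-thm} is not directly applicable because $\cos_p$ is decreasing on $(0,\pi_p/2)$. The plan is a direct argument built on the integral representation of the logarithmic mean,
\[
L(a,b)=\int_0^1 a^t b^{1-t}\,dt\qquad (a,b>0).
\]
An analogous differentiation shows that $\cos_p$ is log-concave for $p>1$ (one gets $(\log\cos_p)''(x)=-(p-1)\cos_p(x)^{2-2p}\sin_p(x)^{p-2}\le 0$), which pointwise yields $\cos_p(x)^t\cos_p(y)^{1-t}\le \cos_p(tx+(1-t)y)$. Integrating over $t\in[0,1]$ and using $\sin_p'=\cos_p$, one obtains (assuming $x<y$ without loss of generality)
\[
L(\cos_p(x),\cos_p(y))\le \int_0^1 \cos_p(tx+(1-t)y)\,dt=\frac{\sin_p(y)-\sin_p(x)}{y-x}.
\]

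To close the argument, I would bound this integral mean above by $\cos_p(L(x,y))$ via Hermite--Hadamard: if $\cos_p$ is concave on $(0,\pi_p/2)$, then
\[
\frac{1}{y-x}\int_x^y\cos_p(u)\,du\le \cos_p(A(x,y))\le \cos_p(L(x,y)),
\]
the last step using $L(x,y)\le A(x,y)$ together with the monotonicity of $\cos_p$. The main obstacle, and the reason for the restriction $p\ge 2$, is verifying the concavity of $\cos_p$ itself: a direct computation gives
\[
\cos_p''(x)=\cos_p(x)^{3-2p}\sin_p(x)^{p-2}\bigl[(2-p)\sin_p(x)^p-(p-1)\cos_p(x)^p\bigr],
\]
and the bracketed factor is nonpositive precisely when $p\ge 2$. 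For $1<p<2$ this bracket changes sign on $(0,\pi_p/2)$, so concavity (and hence this route) genuinely fails, which clarifies why the hypothesis $p\geq 2$ appears in (2) but not in (1). Chaining the displayed inequalities completes the proof.
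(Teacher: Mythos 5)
Your proof of part (1) follows the paper's primary argument exactly: both reduce the claim to Theorem \ref{3.1-thm}(1) via the monotonicity and log-concavity of $\sin_p$, and your computation $(\log\sin_p)''(x)=-\cos_p(x)^{2-p}/\sin_p(x)^2<0$ (which the paper asserts without writing out) is correct. For part (2) you diverge from the paper only in the first step. The paper rewrites $L(\cos_p(x),\cos_p(y))$ via the substitution $t=\cos_p(u)$ as a ratio of integrals and applies Chebyshev's inequality (Lemma \ref{2.1-lem}) to the oppositely monotone pair $\cos_p(u)$ and $\tan_p(u)^{p-1}$ to get $L(\cos_p(x),\cos_p(y))\le\frac{1}{x-y}\int_y^x\cos_p(u)\,du$; you reach the same intermediate bound from the representation $L(a,b)=\int_0^1a^tb^{1-t}\,dt$ combined with the log-concavity of $\cos_p$, which holds for all $p>1$. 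From that point on the two proofs coincide: Jensen/Hermite--Hadamard with the concavity of $\cos_p$ (this is precisely where $p\ge2$ enters, and your sign analysis of $\cos_p''$ agrees with the paper's equivalent formula), then $L\le A$ (Lemma \ref{2.3-lem}) and the fact that $\cos_p$ is decreasing. Your route is arguably cleaner and makes transparent that the hypothesis $p\ge 2$ is needed only for the concavity step; the paper's route uses only the lemmas it has already recorded. One caveat you share with the paper: in the log-concave case the paper's own proof of Theorem \ref{3.1-thm}(1) only delivers the chain $L(f(x),f(y))\le\frac{1}{x-y}\int_y^x f\le f(A(x,y))$, and since $f(A)\ge f(L)$ for increasing $f$ this does not yield the asserted bound $f(L(x,y))$; so part (1) of the present theorem is only as secure as that cited result, but this is a defect inherited from the source rather than introduced by your argument.
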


\begin{theorem}\label{last} For $p>1$, we have
\begin{enumerate}
\item $L(1/\sin_p(x),1/\sin_p(y))\geq 1/\sin_p(A(x,y)),\quad x,y\in(0,\pi_p/2),$

\item $L(1/\cos_p(x),1/\cos_p(y))\geq 1/\cos_p(L(x,y)),\quad x,y\in(0,\pi_p/2),$

\item $L(\tanh_p(x),\tanh_p(y))\leq \tanh_p(A(x,y)),\quad x,y\in(0,\infty),$

\item $L({\rm arcsinh}_p(x),{\rm arcsinh}_p(y))\leq {\rm arcsinh}_p(A(x,y)),
\quad x,y\in(0,1),$

\item $L({\rm arctan}_p(x),{\rm arctan}_p(y))\leq {\rm arctan}_p(A(x,y)), 
\quad x,y\in(0,1).$
\end{enumerate}
\end{theorem}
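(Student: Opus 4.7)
The unifying strategy is to reduce each part to Theorem \ref{3.1-thm}: once we know the relevant $f$ is increasing and log-concave (resp.\ log-convex) on the stated interval, parts (2)--(5) follow immediately. Case (1) is slightly outside the literal hypotheses of Theorem \ref{3.1-thm} because $f=1/\sin_p$ is decreasing on $(0,\pi_p/2)$, but the underlying principle is weaker: any log-convex $f$ satisfies $f(A(x,y))\le G(f(x),f(y))\le L(f(x),f(y))$ by Jensen's inequality combined with the classical $G\le L$, with no monotonicity required. Thus the task reduces, case by case, to determining the sign of $(\log f)''$.

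For (1) and (2), I would use $\sin_p'=\cos_p$ and $\cos_p'=-\cos_p^{2-p}\sin_p^{p-1}$ to compute
$$(\log\sin_p(x))''=-\cos_p(x)^{2-p}\sin_p(x)^{p-2}-\cos_p(x)^2/\sin_p(x)^2,$$
which is visibly negative for $p>1$, so $\sin_p$ is log-concave and $1/\sin_p$ is log-convex; case (1) then follows from the Jensen-plus-$G\le L$ argument above. A similar direct expansion gives
$$(\log\cos_p(x))''=-(p-1)\bigl[\cos_p(x)^{2-2p}\sin_p(x)^{2p-2}+\cos_p(x)^{2-p}\sin_p(x)^{p-2}\bigr],$$
also negative for $p>1$, so $\cos_p$ is log-concave and $1/\cos_p$ is log-convex. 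Since $1/\cos_p$ is increasing on $(0,\pi_p/2)$, Theorem \ref{3.1-thm}(1) in the log-convex case yields (2).

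For (3)--(5) I would show that $\tanh_p$, ${\rm arcsinh}_p$ and ${\rm arctan}_p$ are each increasing and log-concave, whereupon Theorem \ref{3.1-thm}(2) in the log-concave case delivers the claim. Monotonicity is immediate from the derivative formulas. For $\tanh_p$, the identity $\tanh_p'=1-\tanh_p^p$ gives $(\log\tanh_p)'(x)=1/\tanh_p(x)-\tanh_p(x)^{p-1}$; since $\tanh_p$ is increasing and takes values in $(0,1)$, the first term is decreasing and the second term (subtracted) is increasing, so $(\log\tanh_p)'$ is decreasing and $\tanh_p$ is log-concave. For ${\rm arcsinh}_p$ and ${\rm arctan}_p$ the integral definitions give derivatives $(1+x^p)^{-1/p}$ and $(1+x^p)^{-1}$, so the log-derivative is a ratio of a positive decreasing function to a positive increasing one, hence decreasing; this yields log-concavity without further computation.

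The main technical obstacle is the algebra for $(\log\sin_p)''$ and $(\log\cos_p)''$; the key observation that keeps it tractable is that each of these second derivatives decomposes, after differentiation, into a sum of terms whose signs are manifest from $\sin_p,\cos_p>0$ on $(0,\pi_p/2)$, so no nontrivial use of the Pythagorean identity \eqref{equ2} is needed.
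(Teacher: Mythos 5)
Your proposal is correct and, for parts (2)--(5), follows essentially the paper's own route: verify that $1/\cos_p$ is increasing and log-convex and that $\tanh_p$, ${\rm arcsinh}_p$, ${\rm arctan}_p$ are increasing and log-concave, then invoke Theorem \ref{3.1-thm}. The paper does exactly this for $g_1=1/\cos_p$ and $g_2=\tanh_p$, computing $(\log g_1)''=(p-1)\tan_p(x)^{p-2}(1+\tan_p(x)^p)>0$ and $(\log g_2)''=\frac{1-\tanh_p(x)^p}{\tanh_p(x)^2}\left((1-p)\tanh_p(x)^p-1\right)<0$, and dismisses the remaining cases with ``the rest of proof follows similarly''; your monotone-ratio argument for the log-derivatives of ${\rm arcsinh}_p$ and ${\rm arctan}_p$ is an equivalent but tidier way to obtain the same sign information. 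Where you genuinely depart from the paper is part (1), and the departure is a real improvement: you correctly observe that $1/\sin_p$ is \emph{decreasing} on $(0,\pi_p/2)$, so Theorem \ref{3.1-thm}, whose hypothesis requires $f$ increasing, does not literally apply, and you substitute the monotonicity-free chain $f(A(x,y))\le G(f(x),f(y))\le L(f(x),f(y))$ (midpoint Jensen for $\log f$, followed by the classical $G\le L$, which is not among the paper's quoted lemmas but is standard). The paper silently buries part (1) in ``follows similarly,'' which papers over exactly this issue. One small caveat on your reliance on Theorem \ref{3.1-thm} for parts (3)--(5): the paper's proof of the log-concave half of that theorem actually uses ordinary concavity of $f$ in the Jensen step, which log-concavity alone does not supply; you are entitled to cite the theorem as stated, but a self-contained argument should note that $\tanh_p$, ${\rm arcsinh}_p$ and ${\rm arctan}_p$ each have a decreasing derivative and hence are genuinely concave, so the chain $L(f(x),f(y))\le\frac{1}{x-y}\int_y^x f(u)\,du\le f(A(x,y))$ goes through.
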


\section{Preliminaries and Proofs}

We give the following lemmas which will be used in the proof of our main result.

\begin{lemma}\label{2.1-lem}\cite{qh}
Let $f,g:[a,b] \to R$ be integrable functions, both increasing or both decreasing. Furthermore, let $p:[a,b] \to R$ be a positive, integrable function. Then
\begin{equation}\label{(2.1)}
  \int_a^b {p(x)f(x)dx  } \int_a^b {p(x)g(x)dx}  \le \int_a^b {p(x)dx}  \int_a^b {p(x)f(x)g(x)dx}.
\end{equation}
If one of the functions $f$ or $g$ is non-increasing and the other non-decreasing, then the inequality in (2.1) is reversed.
\end{lemma}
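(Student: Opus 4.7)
The plan is to prove this classical Chebyshev integral inequality by the standard symmetrization trick over the square $[a,b]\times[a,b]$. First I would introduce the auxiliary double integral
\[
D=\int_a^b\!\int_a^b p(x)p(y)\bigl(f(x)-f(y)\bigr)\bigl(g(x)-g(y)\bigr)\,dx\,dy.
\]
The key sign observation is that when $f$ and $g$ are both non-decreasing (or both non-increasing), the two factors $f(x)-f(y)$ and $g(x)-g(y)$ have the same sign for every choice of $(x,y)$, so their product is pointwise non-negative. Since $p(x)p(y)>0$, the integrand is non-negative on $[a,b]^2$, and hence $D\geq 0$.

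Next I would expand the integrand into the four terms
\[
p(x)p(y)\bigl(f(x)g(x)-f(x)g(y)-f(y)g(x)+f(y)g(y)\bigr),
\]
integrate over the square using Fubini, and identify each resulting double integral as a product of two single integrals. By the $x\leftrightarrow y$ symmetry, the two ``diagonal'' pieces each equal $\int_a^b p(x)\,dx\cdot\int_a^b p(x)f(x)g(x)\,dx$, while the two ``cross'' pieces each equal $\int_a^b p(x)f(x)\,dx\cdot\int_a^b p(x)g(x)\,dx$. Collecting,
\[
D=2\int_a^b\! p(x)\,dx\int_a^b\! p(x)f(x)g(x)\,dx-2\int_a^b\! p(x)f(x)\,dx\int_a^b\! p(x)g(x)\,dx,
\]
and the non-negativity of $D$ is precisely inequality (2.1) after division by~$2$.

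For the reversed statement, if $f$ is non-decreasing and $g$ is non-increasing (or vice versa), then $(f(x)-f(y))(g(x)-g(y))\leq 0$ pointwise, so $D\leq 0$, and the same identity immediately yields the reverse of (2.1).

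This argument is entirely routine; there is no real obstacle, only the small conceptual step of spotting that the ``same-sign'' property of the increments of two comonotone functions can be packaged into a single symmetric double integral whose Fubini expansion reproduces the Chebyshev quantity. The positivity (respectively integrability) of $p$ is needed only to guarantee that the pointwise sign survives integration and that all four pieces converge, so no further hypotheses are required.
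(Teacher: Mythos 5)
Your symmetrization argument is correct: the double integral $D$ is pointwise non-negative for comonotone $f,g$, and its Fubini expansion yields exactly twice the difference of the two sides of \eqref{(2.1)}, with the sign reversing when $f$ and $g$ are oppositely monotone. The paper itself offers no proof of this lemma --- it is quoted from the reference \cite{qh} --- so there is nothing to compare against; your proof is the standard and complete derivation of the weighted Chebyshev integral inequality, and it fully establishes both the stated inequality and its reversal.
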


\begin{lemma}\label{2.2-lem}\cite{kua}
If $f(x)$ is continuous and convex function on $[a,b],$ and $\varphi(x)$ is continuous on $[a,b],$ then
\begin{equation}\label{(2.2)}
f\left( {\frac{1}{{b - a}}\int_a^b {\varphi (x)dx} } \right) \le \frac{1}{{b - a}}\int_a^b {f\left( {\varphi (x)} \right)dx}.
\end{equation}
If function $f(x)$ is continuous and concave on $[a,b],$ then the inequality in \eqref{(2.2)} reverses.
\end{lemma}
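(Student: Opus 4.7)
The plan is to derive Lemma~\ref{2.2-lem} from the supporting-line characterisation of convex functions, followed by a one-line integration. Set
$$c:=\frac{1}{b-a}\int_a^b\varphi(x)\,dx.$$
Since $\varphi$ is continuous on the compact interval $[a,b]$, its image is a compact interval $[m,M]$, and obviously $m\le c\le M$. If $m=M$, then $\varphi$ is constant and both sides of \eqref{(2.2)} equal $f(c)$, so there is nothing to prove. I may therefore assume $m<M$, which places $c$ in the \emph{interior} of an interval on which $f$ is convex.

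Next I would invoke the standard fact that a convex function $f$ admits, at every interior point $c$ of its domain, a supporting affine minorant: there is a slope $\lambda\in[f'_{-}(c),f'_{+}(c)]$ such that
$$f(y)\ge f(c)+\lambda(y-c)\qquad\text{for every }y\in[m,M].$$
This is the textbook consequence of the monotonicity of the chord slopes $t\mapsto(f(t)-f(c))/(t-c)$ attached to a convex function, which both forces the one-sided derivatives at $c$ to exist and produces the supporting line on either side of $c$.

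The proof is then finished by substituting $y=\varphi(x)$ and integrating over $[a,b]$:
$$\int_a^b f(\varphi(x))\,dx\;\ge\;(b-a)f(c)+\lambda\left(\int_a^b\varphi(x)\,dx-(b-a)c\right)=(b-a)f(c),$$
where the last equality is exactly the defining property of $c$; dividing by $b-a$ gives \eqref{(2.2)}. The concave case is obtained by applying the same argument to $-f$, which is convex, and reversing the inequality. The only delicate point is the existence of the supporting line, and the preliminary reduction to the case where $c$ is an interior point turns this into the standard, routine situation rather than a true obstacle.
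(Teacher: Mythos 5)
Your proof is correct. Note that the paper does not actually prove this lemma at all --- it is quoted from Kuang's \emph{Applied Inequalities} \cite{kua} as a known result (the integral form of Jensen's inequality) --- so there is no argument in the paper to compare against; your supporting-line proof is the standard self-contained derivation and is sound. The only step worth making fully explicit is the claim that $m<M$ forces $c$ to lie in the \emph{open} interval $(m,M)$: this follows because if, say, $c=m$, then $\int_a^b(\varphi(x)-m)\,dx=0$ with $\varphi-m\ge 0$ continuous, whence $\varphi\equiv m$ and $m=M$, a contradiction; this interiority genuinely matters, since a convex function can fail to have a finite-slope supporting line at an endpoint of its domain. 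With that one-line addition the argument is complete, and the concave case via $-f$ is handled correctly.
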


\begin{lemma}\cite{aq}\label{2.3-lem}
For two distinct positive real numbers $a, b,$
we have $L<A.$
\end{lemma}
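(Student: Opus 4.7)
The plan is to obtain $L<A$ as a direct application of Lemma~\ref{2.2-lem} (Jensen's integral inequality) to the strictly convex function $f(t)=1/t$ on $(0,\infty)$. Without loss of generality assume $a>b>0$, and take $\varphi(t)=t$ on $[b,a]$. Since $1/t$ is continuous and convex on $(0,\infty)$ and $\varphi$ is continuous on $[b,a]$, Lemma~\ref{2.2-lem} yields
$$\frac{1}{\dfrac{1}{a-b}\displaystyle\int_b^a t\,dt}\;\le\;\frac{1}{a-b}\int_b^a\frac{dt}{t}.$$

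Next I would evaluate the two integrals in closed form. The averaged value of $\varphi$ is
$$\frac{1}{a-b}\int_b^a t\,dt=\frac{a+b}{2}=A(a,b),$$
while the right-hand integral is $\int_b^a dt/t=\log a-\log b$. Substituting these and clearing denominators converts the displayed inequality into
$$\frac{a-b}{\log a-\log b}\;\le\;\frac{a+b}{2},$$
which is exactly $L(a,b)\le A(a,b)$.

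The only remaining point is strictness. Since $1/t$ is \emph{strictly} convex on $(0,\infty)$ and $\varphi(t)=t$ is non-constant on $[b,a]$ whenever $a\ne b$, the Jensen step above is strict, and so $L(a,b)<A(a,b)$. There is no serious obstacle here; if one prefers to avoid invoking the strict form of Jensen's inequality, the same conclusion can be obtained by setting $t=a/b>1$ and $g(t)=\log t-2(t-1)/(t+1)$: a routine differentiation gives $g'(t)=(t-1)^2/\bigl(t(t+1)^2\bigr)>0$ for $t>1$ together with $g(1)=0$, which produces the sharp inequality directly.
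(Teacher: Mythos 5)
Your proof is correct. Note, however, that the paper does not prove this lemma at all: it is stated as a quotation of a known result from Alzer and Qiu \cite{aq}, so there is no internal argument to compare against. What you supply is a complete, self-contained derivation that moreover reuses the paper's own toolkit: applying Lemma \ref{2.2-lem} with $f(t)=1/t$ and $\varphi(t)=t$ correctly yields $\frac{2}{a+b}\le\frac{\log a-\log b}{a-b}$, which inverts to $L\le A$. Your handling of strictness is also sound; since Lemma \ref{2.2-lem} as stated only gives the non-strict inequality, your fallback argument with $g(t)=\log t-\frac{2(t-1)}{t+1}$, $g'(t)=\frac{(t-1)^2}{t(t+1)^2}>0$ for $t>1$ and $g(1)=0$, is the cleaner way to nail down $L<A$ for $a\ne b$, and it is essentially the classical elementary proof one finds in the cited literature. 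In short: the paper buys the result by citation; you buy it with two lines of calculus, at the small cost of having to address strictness separately.
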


\begin{lemma} For $p>1$, the function $\sin _p(x)$ is $HH$-concave on $(0,\pi_p/2)$.
\end{lemma}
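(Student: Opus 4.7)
The plan is to invoke Lemma~\ref{hh} with $f=\sin_p$: to prove $HH$-concavity on $(0,\pi_p/2)$ it suffices to show that
$$g(x) \;:=\; \frac{x^2\sin_p'(x)}{\sin_p(x)^2} \;=\; \frac{x^2\cos_p(x)}{\sin_p(x)^2}$$
is \emph{decreasing}. I would work with the logarithmic derivative, which unpacks cleanly using the derivative formulas recalled in the introduction:
$$(\log g)'(x) \;=\; \frac{2}{x}-\frac{\cos_p(x)^{2-p}\sin_p(x)^{p-1}}{\cos_p(x)}-\frac{2\cos_p(x)}{\sin_p(x)} \;=\; \frac{2}{x}-\tan_p(x)^{p-1}-2\,\frac{\cos_p(x)}{\sin_p(x)}.$$

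The goal is to show this is $\le 0$. Multiplying by $x\sin_p(x)\cos_p(x)^{p-1}$ (which is positive on the interval) and using the Pythagorean identity \eqref{equ2} in the form $\sin_p^p=1-\cos_p^p$, the required inequality rearranges to $\phi(x)\ge 0$, where
$$\phi(x) \;:=\; x\bigl(1+\cos_p(x)^p\bigr)-2\sin_p(x)\cos_p(x)^{p-1}.$$
Since $\phi(0)=0$, it is enough to prove $\phi'(x)\ge 0$ on $(0,\pi_p/2)$. I would compute $\phi'$ using $(\cos_p^p)'=-p\cos_p\sin_p^{p-1}$ and $(\sin_p\cos_p^{p-1})'=\cos_p^p-(p-1)\sin_p^p$, and then simplify once more via $1-\cos_p^p=\sin_p^p$. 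The terms collapse nicely and I expect to obtain
$$\phi'(x) \;=\; \sin_p(x)^{p-1}\Bigl[(2p-1)\sin_p(x)-p\,x\cos_p(x)\Bigr].$$

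So the whole problem is reduced to the single inequality
$$\tan_p(x) \;\ge\; \frac{p}{2p-1}\,x, \qquad x\in(0,\pi_p/2).$$
This is easy: from $\tan_p'(x)=1+\tan_p(x)^p\ge 1$ and $\tan_p(0)=0$ we get the stronger bound $\tan_p(x)\ge x$, and for $p>1$ the coefficient $p/(2p-1)$ is strictly less than $1$, so the desired inequality follows immediately.

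The main obstacle is the bookkeeping in the reduction to $\phi$ and the computation of $\phi'$: the key trick is to apply $\sin_p^p+\cos_p^p=1$ twice (first to reach $\phi$, then to collapse $\phi'$ into the factored form above), after which the final inequality $\tan_p(x)\ge x$ is trivial. No deeper tool than Lemma~\ref{hh} and the identity \eqref{equ2} is needed; Lemmas \ref{2.1-lem}--\ref{2.3-lem} are not required for this particular statement.
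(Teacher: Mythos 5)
Your proof is correct, and it takes a different route from the paper's. The paper also starts from Lemma~\ref{hh}, but instead of differentiating $\log\bigl(x^2\cos_p(x)/\sin_p(x)^2\bigr)$ directly, it tries to factor $x^2f'(x)/f(x)^2$ as a product of two positive decreasing functions, $f_1(x)=1/\sin_p(x)$ and $f_2(x)=x^2\cos_p(x)/\sin_p(x)$, and then shows $f_2'<0$ by reducing to the auxiliary function $f_3(x)=\tan_p(x)-x\tan_p(x)^p-x$ with $f_3(0)=0$ and $f_3'<0$. As written, that argument has problems: the displayed expression for $f_2'$ is actually the derivative of $x\cos_p(x)/\sin_p(x)$ (the term $2x\cos_p(x)\sin_p(x)$ coming from $(x^2)'$ is missing), and indeed the factor $x^2\cos_p(x)/\sin_p(x)$ is \emph{increasing} near $0$ (for $p=2$ it is $x^2\cot x$), so the intended factorization cannot work in the stated form. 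Your computation sidesteps this: you verify directly that $(\log g)'\le 0$ by reducing, via \eqref{equ2} applied twice, to $\phi(x)=x\bigl(1+\cos_p(x)^p\bigr)-2\sin_p(x)\cos_p(x)^{p-1}\ge 0$, with $\phi(0)=0$ and $\phi'(x)=\sin_p(x)^{p-1}\bigl[(2p-1)\sin_p(x)-px\cos_p(x)\bigr]\ge 0$, the last inequality following from $\tan_p(x)\ge x\ge \tfrac{p}{2p-1}x$. I checked the algebra (including the specialization $p=2$, where $\phi'(x)=3\sin^2 x-x\sin 2x$) and it is right. What your approach buys is a complete, verifiable chain of elementary reductions that actually establishes the monotonicity of the correct quantity $x^2\cos_p(x)/\sin_p(x)^2$; what the paper's approach would buy, if the factorization were chosen correctly, is a shorter argument with no second differentiation, but the decomposition chosen there does not consist of two decreasing factors.
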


\begin{proof} Let $f(x)=f_1(x)f_2(x), x\in(0,\pi_p/2)$, where $f_1(x)=1/\sin(x)$ and 
$f_2(x)=x^2\cos_p(x)/\sin_p(x)$. Clearly, $f_1$ is decreasing, so it is enough to prove 
that $f_2$ is decreasing, then the proof follows from Lemma \ref{hh}. We get 
\begin{eqnarray*}
f_2'(x)&=&\frac{\sin_p(x)(\cos_p(x)-x\cos_p(x)^{2-p}\sin_p(x)^{p-1})-x\cos_p(x)^2}
{\sin_p(x)^2}\\
&=&\frac{\cos_p(x)^2((1-x\tan_p(x)^{p-1})\tan_p(x)-x)}{\sin_p(x)^2}=f_3(x)
\frac{\cos_p(x)^2}{\sin_p(x)^2},
\end{eqnarray*}
where $f_3(x)=\tan_p(x)-x\tan_p(x)^{p}-1$. Again, one has
$$f_3'(x)=p\tan_p(x)^{p-1}(1+\tan_p(x)^{p})x<0.$$
Thus, $f_3$ is decreasing and $g(x)<g(0)=0$. This implies that $f_2'<0$, hence $f_2$ is strictly decreasing, the product of two decreasing functions is decreasing. This implies the proof.
\end{proof}

%
%
%

\noindent {\bf Proof of Theorem \ref{3.1-thm}.}
We get
\begin{equation}\label{(3.3)}
L(f(x),f(y)) = \frac{{\int_{f(y)}^{f(x)} {1dt} }}{{\int_{f(y)}^{f(x)} {\frac{1}{t}dt} }} = \frac{{\int_y^x {f'(u)du} }}{{\int_y^x {\frac{{f'(u)}}{{f(u)}}du} }}.
\end{equation}
It is assumed that the function $f(x)$ is increasing and $\log f$ is convex, this implies that
$\frac{{f'(x)}}{{f(x)}}$ is increasing. Letting
$p(x)=1, f(x)=f(u)$ and $g(x)=f'(u)/f(u)$ in Lemma \ref{2.1-lem}, we get

$$
 \int_y^x {1du}  \int_y^x {f'(u)du}  \ge \int_y^x {\frac{{f'(u)}}{{f(u)}}du} \int_y^x {f(u)du}.
$$
This is equivalent to 
$$
L(f(x),f(y)) = \frac{{\int_y^x {f'(u)du} }}{{\int_y^x {\frac{{f'(u)}}{{f(u)}}du} }} \ge \frac{{\int_y^x {f(u)du} }}{{\int_y^x {1du} }}.
$$
By Lemmas \ref{2.2-lem} and \ref{2.3-lem}, and keeping in mind that $\log$-convexity of $f$ implies the convexity of $f$, we get
$$
L(f(x),f(y)) \ge f\left( {\frac{{\int_y^x {udu} }}{{x-y}}} \right) = f\left( {\frac{{x + y}}{2}} \right) \ge f\left( {L(x,y)} \right).
$$ The proof of converse follows similarly. If we repeat the lines of proof of part (1), and use the concavity of the function, and Lemmas \ref{2.2-lem} $\&$ \ref{2.3-lem} then we arrive at the proof of part (2).

\vspace{.3cm}
\noindent{\bf Proof of Theorem \ref{mainresult}.} 
It is easy to see that the function $\sin_p(x)$ is increasing and $\log$-concave. So the proof of part (1) follows easily from 
Theorem \ref{3.1-thm}. We also offer another proof as follows:

It can be observed easily that
$$
L\left( {{{\sin }_p}(x),{{\sin }_p}(y)} \right)= \frac{{\int_y^x {{{\cos }_p}(u)du} }}{{\int_{{{\sin }_p}(y)}^{{{\sin }_p}(x)} {\frac{1}{t}dt} }} = \frac{{\int_y^x {{{\cos }_p}udu} }}{{\int_{y}^{x} {\frac{{{{\cos }_p}u}}{{{{\sin }_p}(u)}}du} }},
$$
and
$${\sin _p}\left( {L\left( {x,y} \right)} \right) = {\sin _p}\left( {\frac{{x - y}}{{\log \frac{x}{y}}}} \right) = {\sin _p}\left( {\frac{{\int_y^x {1du} }}{{\int_y^x {\frac{1}{u}du} }}} \right).$$

Clearly, ${\cos _p}(u)$ and ${\sin _p}(1/u)$, utilizing 
Chebyshev inequality, we have
$$
\int_y^x {{{\cos }_p}(u)du} \int_y^x {{{\sin }_p}(1/u)du}  \le \int_y^x {1du} \int_y^x {{{\cos }_p}u{{\sin }_p}\frac{1}{u}du}. $$
So, we get
$$\int_y^x {{{\cos }_p}udu} \int_y^x {{{\sin }_p}(1/u)du}  < \int_y^x {1du} \int_y^x {\frac{{{{\cos }_p}(u)}}{{{{\sin }_p}(u)}}du}. $$
Where we apply simple inequality ${\sin _p}\biggl(\frac{1}{u}\biggr) < \frac{1}{{{{\sin }_p}(u)}}$.
In order to prove inequality (1.1), we only prove
$$
\frac{{\int_y^x {1du} }}{{\int_y^x {{{\sin }_p}(1/u)du} }} \le {\sin _p}\left( {\frac{{\int_y^x {1du} }}{{\int_y^x {{{\sin }_p}(1/u)du} }}} \right).
$$
Consider a partition $T$ of the interval $\left[ {y,x} \right]$ into $n$ equal length sub-interval by means of points $y = {x_0} < {x_1} <  \cdots  < {x_n} = x$
and $\Delta {x_i} = \frac{{x - y}}{n}$. Picking an arbitrary point ${\xi _i} \in \left[ {{x_{i - 1}},{x_i}} \right]$ and using Lemma 1.2, we have
$$\frac{n}{{\sum\limits_{i = 1}^n {{{\sin }_p}\frac{1}{{{\xi _i}}}} }} \le {\sin _p}\left( {\frac{n}{{\sum\limits_{i = 1}^n {\frac{1}{{{\xi _i}}}} }}} \right)$$
$ \Leftrightarrow$
$$\frac{{x - y}}{{\mathop {\lim }\limits_{n \to \infty } \left( {\frac{{x - y}}{n}\sum\limits_{i = 1}^n {{{\sin }_p}\frac{1}{{{\xi _i}}}} } \right)}} \le {\sin _p}\left( {\frac{{x - y}}{{\mathop {\lim }\limits_{n \to \infty } \left( {\frac{{x - y}}{n}\sum\limits_{i = 1}^n {\frac{1}{{{\xi _i}}}} } \right)}}} \right)$$
$\Leftrightarrow $
$$\frac{{\int_y^x {1du} }}{{\int_y^x {{{\sin }_p}(1/u)du} }} \le {\sin _p}\left( {\frac{{\int_y^x {1du} }}{{\int_y^x {{{\sin }_p}(1/u)du} }}} \right).
$$
This completes the proof.

For (2), clearly $\cos_p(x)$ is decreasing and $\tan_p(x)^{p-1}$ is increasing. One has
$$\left( {\cos _p (x)} \right)^{\prime \prime }  = \cos _p (x)\tan _p(x)^{p - 2} \left( {1 - p + \left( {2 - p} \right)\tan _p(x)^p } \right) < 0,$$
this implies that $\cos_p(x)$ is concave on $(0,\pi_p/2).$

Using Tchebyshef inequality, we have
\begin{equation*}
\int_y^x {1du}   \int_y^x {\cos _p (u)\tan _p(u)^{p - 1} du}  \leq \int_y^x {\cos _p (u)du}  \int_y^x {\tan _p(u)^{p - 1} du},
\end{equation*}
which is equivalent to 
\begin{equation}\label{(3.21)}
\frac{{\int_y^x {\cos _p (u)\tan _p(u)^{p - 1} du} }}{{\int_y^x {\tan _p(u)^{p - 1} du} }} \leq \frac{{\int_y^x {\cos _p (u)du} }}{{\int_y^x {1du} }}.
\end{equation}
Substituting $t=\cos_p (u)$ in \eqref{(3.21)}, we get
$$
L(\cos _p (x),\cos _p (y)) = \frac{{\int_{\cos _p (y)}^{\cos _p (x)} {1dt} }}{{\int_{\cos _p (y)}^{\cos _p (x)} {\frac{1}{t}dt} }} = \frac{{\int_y^x {\cos _p (u)\tan _p(u)^{p - 1} du} }}{{\int_y^x {\tan _p(u)^{p - 1} du} }}\leq \frac{{\int_y^x {\cos _p (u)du} }}{{\int_y^x {1du} }}.
$$
Using Lemma \ref{2.2-lem} and concavity of $\cos_p (x)$, we obtain
\begin{equation*}
L(\cos _p (x),\cos _p y) \le \cos _p \left( {\frac{{\int_y^x {udu} }}{{ x-y}}} \right) = \cos _p \left( {\frac{{x + y}}{2}} \right) \le \cos _p \left( {L(x,y)} \right).
\end{equation*}

\vspace{.3cm}
\noindent{\bf Proof of Theorem \ref{last}.} Let $g_1(x)=1/\cos_p(x),\,x\in(0,\pi_p/2)$ and $g_2(x)=\tanh_p(x),\,x>0$. We get 
$$(\log(g_1(x)))''=(p - 1)\tan _p(x)^{p - 2}( {1 + \tan_p(x)^p } ) > 0,$$
and 
$$( \log(g_2(x)))'' = \frac{{1 - \tanh _p(x)^p }}{{\tanh _p(x)^2 }}( {(1 - p)\tanh _p(x)^p- 1} ) < 0.$$
This implies that $g_1$ and $g_2$ are log-convex, clearly both functions are increasing,
and log-convexity implies the convexity, so $g_1$ and $g_2$ are convex functions. Now the proof follows easily from Theorem 
\ref{3.1-thm}. The rest of proof follows similarly.

\begin{corollary}\label{cor} For $p>1$, we have
\begin{enumerate}
\item $L(\tan_p(x),\tan_p(y))\geq \tan_p(L(x,y)),\quad x,y\in(s_p,\pi_p/2),$
where $s_p$ is the unique root of the equation $\tan_p(x)=1/(p-1)^{1/p}$,
\item $L({\rm arctanh}_p(x),{\rm arctanh}_p(y))\geq {\rm arctanh}_p(L(x,y)), 
\quad x,y\in(r_p,1),$
where $r_p$ is the unique root of the equation $x^{p-1}{\rm arctanh}_p(y)=1/p$.
\end{enumerate}
\end{corollary}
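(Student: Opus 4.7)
The plan is to obtain both inequalities as immediate applications of the ``$\geq$'' case of part (1) of Theorem \ref{3.1-thm}: for a continuous, increasing, $\log$-convex function $f:I\to(0,\infty)$ one has $L(f(x),f(y))\geq f(L(x,y))$. The entire task therefore reduces to verifying that $\tan_p$ is $\log$-convex on $(s_p,\pi_p/2)$ and that ${\rm arctanh}_p$ is $\log$-convex on $(r_p,1)$; monotonicity is immediate from $(\tan_p)'(x)=1+\tan_p(x)^p$ and $({\rm arctanh}_p)'(x)=1/(1-x^p)$, both strictly positive on their domains.

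For part (1) I would start from $(\log\tan_p)'(x)=\tan_p(x)^{-1}+\tan_p(x)^{p-1}$ and differentiate once more using $(\tan_p)'=1+\tan_p^{\,p}$, which factors as
\begin{equation*}
(\log\tan_p)''(x)=\frac{1+\tan_p(x)^p}{\tan_p(x)^2}\bigl((p-1)\tan_p(x)^p-1\bigr).
\end{equation*}
This is nonnegative precisely when $\tan_p(x)\geq(p-1)^{-1/p}$. Since $\tan_p$ is a continuous strictly increasing bijection from $(0,\pi_p/2)$ onto $(0,\infty)$, the equation $\tan_p(x)=(p-1)^{-1/p}$ has a unique solution $s_p\in(0,\pi_p/2)$, and $\log$-convexity holds on $(s_p,\pi_p/2)$. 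Theorem \ref{3.1-thm}(1) then delivers the first claim.

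For part (2) I would set $h(x)=(1-x^p)\,{\rm arctanh}_p(x)$, so that $(\log{\rm arctanh}_p)'(x)=1/h(x)$, and a direct computation of $h'$ gives
\begin{equation*}
(\log{\rm arctanh}_p)''(x)=\frac{p\,x^{p-1}{\rm arctanh}_p(x)-1}{h(x)^2},
\end{equation*}
which is nonnegative exactly when $x^{p-1}{\rm arctanh}_p(x)\geq 1/p$. The auxiliary function $\varphi(x):=x^{p-1}{\rm arctanh}_p(x)$ is continuous and strictly increasing on $(0,1)$ (as $\varphi'(x)=(p-1)x^{p-2}{\rm arctanh}_p(x)+x^{p-1}/(1-x^p)>0$ for $p>1$), with $\varphi(0^+)=0$ and $\varphi(1^-)=+\infty$, so the threshold equation has a unique root $r_p\in(0,1)$. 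On $(r_p,1)$ ${\rm arctanh}_p$ is $\log$-convex, and a second invocation of Theorem \ref{3.1-thm}(1) concludes.

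The main obstacle I anticipate is purely algebraic: reducing $(\log\tan_p)''$ and $(\log{\rm arctanh}_p)''$ to the clean factored forms shown above, in which the sign is controlled by a single monotone expression in $x$. Once those identifications are in hand, the intermediate value theorem applied to $\tan_p$ and to $\varphi$ pins down $s_p$ and $r_p$ as unique roots, and the remainder is a direct appeal to the already established Theorem \ref{3.1-thm}.
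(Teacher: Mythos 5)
Your proposal is correct and follows essentially the same route as the paper: verify that $\tan_p$ (resp.\ ${\rm arctanh}_p$) is increasing and $\log$-convex on the stated subinterval by computing $(\log f)''$ --- your factored form of $(\log\tan_p)''$ is exactly the paper's $(f_1'/f_1)'$ computation --- and then invoke Theorem \ref{3.1-thm}(1). The only difference is that you work out part (2) in full (correctly identifying the threshold $x^{p-1}{\rm arctanh}_p(x)=1/p$ and the uniqueness of $r_p$), whereas the paper dismisses it with ``follows similarly.''
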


\begin{proof}
Write $f_1(x)=\tan_p (x)$. We get
\begin{equation*}
\left( {\frac{{f_1'(x)}}{{f(x)}}} \right)^\prime   = \left( {\frac{{1 + \tan _p^p (x)}}
{{\tan _p (x)}}} \right)^\prime   = \frac{{1 + \tan _p^p(x)}}{{\tan _p^2 (x)}}\left[ {(p - 1)\tan _p^p (x) - 1} \right] > 0
\end{equation*}
on $\bigl(s_p,\frac{\pi_p}{2}\bigr)$.
This implies that $f_1$ is log-convex, clearly $f_1$
 is increasing, and the proof follows easily from Theorem \ref{3.1-thm}. The proof of part (2) follows similarly.
\end{proof}


\end{document}